\definecolor{darkred}{RGB}{100,0,0}
\definecolor{darkgreen}{RGB}{0,100,0}
\definecolor{darkblue}{RGB}{0,0,150}
\DeclarePairedDelimiter\floor{\lfloor}{\rfloor}
\DeclarePairedDelimiter\ceil{\lceil}{\rceil}
\newtheorem{thm}{Theorem}
\theoremstyle{remark}
\newtheorem{rem}{Remark}
\theoremstyle{definition}
\newtheorem{con}{Contribution}
\def\beq{\begin{equation}} 
\def\eeq{\end{equation}}
\def\beqn{\begin{eqnarray*}}
\def\eeqn{\end{eqnarray*}}
\def\Bitem{\begin{itemize}\setlength{\itemsep}{.2in}}
\def\bitem{\begin{itemize}\setlength{\itemsep}{.05in}}
\def\eitem{\end{itemize}}
\def\Benum{\begin{enumerate}\setlength{\itemsep}{.2in}}
\def\benum{\begin{enumerate}\setlength{\itemsep}{.05in}}
\def\eenum{\end{enumerate}}
\def\bmult{\begin{multline*}}
\def\emult{\end{multline*}}
\def\bcenter{\begin{center}}
\def\ecenter{\end{center}}
\def\bframe{\begin{frame}}
\def\eframe{\end{frame}}
\newcommand{\thmref}[1]{Theorem~\ref{thm:#1}}
\newcommand{\secref}[1]{Section~\ref{sec:#1}}
\newcommand{\figref}[1]{Figure~\ref{fig:#1}}
\newcommand{\algref}[1]{Algorithm~\ref{alg:#1}}
\DeclareMathOperator*{\argmax}{arg\, max}
\def\cS{\mathcal{S}}
\def\bX{\mathbf{X}}
\newcommand{\E}{\operatorname{\mathbb{E}}}
\renewcommand{\P}{\operatorname{\mathbb{P}}}
\def\eps{\varepsilon}
\def\symd{\triangle}
\def\1{\mathbbm{1}}
\def\scan{\textsc{scan}}
\def\mscan{\textsc{mscan}}
\definecolor{purple}{rgb}{0.4,.1,.9}
\eacurl\url{http://www.math.ucsd.edu/~eariasca/}
\ylurl\url{https://nozoeli.github.io/}
\begin{document}
	\thispagestyle{empty}
	
	\title{A Multiscale Scan Statistic for Adaptive Submatrix Localization}
	\author{
		Yuchao Liu\thanks{Microsoft Corporation --- \href{https://nozoeli.github.io/}{Homepage} }
		\and 
		Ery Arias-Castro\thanks{University of California, San Diego --- \href{http://www.math.ucsd.edu/\~eariasca/}{Homepage}}
	}
	\date{}
	\maketitle

\begin{abstract}
  We consider the problem of localizing a submatrix with larger-than-usual entry values inside a data matrix, without the prior knowledge of the submatrix size. We establish an optimization framework based on a multiscale scan statistic, and develop algorithms in order to approach the optimizer. We also show that our estimator only requires a signal strength of the same order as the minimax estimator with oracle knowledge of the submatrix size, to exactly recover the anomaly with high probability. We perform some simulations that show that our estimator has superior performance compared to other estimators which do not require prior submatrix knowledge, while being comparatively faster to compute.
\end{abstract}

\section{Introduction}

Observing a data matrix $\bX$, the problem of submatrix localization, a.k.a., biclustering or coclustering, consists in  localizing one or several submatrices whose entries are `unusually large', or significant in some other prescribed way. Such submatrices may be of special interest, since the unusual entries may indicate some potential association or relationship between the corresponding variables.  One important application is in the analysis of gene expression data \citep{cheng2000biclustering, tanay2002discovering}, where the row and column indices stand for the genes and conditions, respectively. See \citep{madeira2004biclustering,charrad2011simultaneous} for a survey and \citep{prelic2006systematic} for a comparison of existing methods.

\subsection{Submatrix localization} \label{sec:introlocal}

We denote by $M$ and $N$ the number of rows and columns of the data matrix $\bX = (X_{ij})$. We assume for simplicity that there is only one submatrix, of size $m^* \times n^*$, to be localized. We further assume that the entries are independent and normally distributed, and with same (known) variance, as well as homogeneous both inside and outside the anomalous submatrix, or more formally,
\beq \label{normal}
X_{ij} = \theta 1_{\{(i,j) \in( I^* \times J^*)\}} + \eps_{ij},
\eeq
where the $\eps_{ij}$ are IID standard normal distribution, while $I^* \subset [M] , J^* \subset [N]$ are the index sets defining anomalous submatrix.  Note that $|I^*| = m^*$ and $|J^*| = n^*$. 
Here we use the symbol $[M]$ to represent integer set $\{ 1,\ldots, M\}$, while $|I|$ is the cardinality of discrete set $I$. 
The parameter $\theta$, which is assumed to be strictly positive in this case, quantifies the per-element signal strength within the submatrix. 

This parametric setup is used, for example, in \citep{butucea2013detection, kolar2011minimax, butucea2015sharp}.  \citep{butucea2013detection} focuses on the simpler problem of detecting the presence of an anomalous submatrix and considers the case where the submatrix size is unknown, while \citep{kolar2011minimax, butucea2015sharp} focus on localizing the submatrix with full knowledge of its size $(m^*,n^*)$.  In all these papers, submatrices are `scanned' for anomaly.
In detail, the scan statistic, with $(m^*,n^*)$ \emph{known}, is defined as 
\beq\notag
\scan_{m^*,n^*} (\bX) = \max_{I\subset [M],|I| = m^*, J \subset [N] , |J| = n^*} \sum_{(i,j) \in  I \times J} X_{ij}.
\eeq 
Localizing the submatrix by the scan statistic is simply returning the row and column index sets achieving the maximum, namely:
\beq\label{scan}
\Phi_{\scan}(\bX) = \argmax_{I\subset [M],|I| = m^*, J \subset [N] , |J| = n^*} \sum_{(i,j) \in  I \times J} X_{ij}.
\eeq
This estimator has the `minimax' property under the above assumptions.  
\begin{thm}[\cite{butucea2015sharp} Theorems 2.1, 2.2] \label{thm:bis} 
Consider the model \eqref{normal} as described above in an asymptotic regime where 
\beq\label{assume}
M,N,m^*,n^* \to \infty, \quad \frac{\max(m^*, n^*)}{\min (M,N)} \to 0.
\eeq
Denote
\begin{multline*}
\theta_0 
= \max \bigg\{ \frac{\sqrt{2\log n^*} + \sqrt{2 \log (N-n^*)}}{\sqrt{m^*}}, \\
\frac{\sqrt{2\log m^*} + \sqrt{2 \log (M-m^*)}}{\sqrt{n^*}}, \\ \frac{\sqrt{2n^*\log(N/n^*) + 2m^*\log (M/m^*)}}{\sqrt{m^*n^*}}  \bigg\}.
\end{multline*}
Then if 
\beq\notag
\liminf \theta/\theta_0 > 1,
\eeq
the estimator \eqref{scan} is strongly consistent in the sense that 
\beq\notag
\P (\Phi_{\scan}(\bX) \neq I^* \times J^*) \to 0.
\eeq
Moreover, if 
\beq\label{lower}
\limsup \theta/\theta_0 < 1,
\eeq
there does not exist a strongly consistent estimator. 
\end{thm}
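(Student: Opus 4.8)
The plan is to prove the two halves separately: the achievability direction ($\liminf\theta/\theta_0>1\Rightarrow$ consistency) by a union bound over alternative submatrices organized by their overlap with $(I^*,J^*)$, and the impossibility direction ($\limsup\theta/\theta_0<1\Rightarrow$ no consistent estimator) by reducing to a Bayesian testing problem and applying Le~Cam/Fano-type lower bounds. Throughout I would exploit the Gaussian structure: for any candidate $(I,J)$ with $|I|=m^*$, $|J|=n^*$, the gap statistic
\[
D(I,J) \;=\; \sum_{(i,j)\in I^*\times J^*} X_{ij} \;-\; \sum_{(i,j)\in I\times J} X_{ij}
\]
is Gaussian: writing $a=m^*-|I\cap I^*|$ and $b=n^*-|J\cap J^*|$ for the numbers of incorrect rows and columns and $k=k(a,b)=m^*b+n^*a-ab$ for (half) the symmetric-difference size, the signal and noise cancel on the overlap, so $D(I,J)\sim\mathcal N(\theta k,\,2k)$ and $\P(D(I,J)\le 0)=\Phi(-\theta\sqrt{k/2})$. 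Exact recovery fails precisely when some $(I,J)\ne(I^*,J^*)$ achieves $D(I,J)\le 0$.

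For achievability I would split this event by the regime of $(a,b)$ and show that the three terms of $\theta_0$ are exactly the thresholds for the three boundary regimes: single-column swaps $(a,b)=(0,1)$, count $\approx n^*(N-n^*)$ with $k=m^*$, for the first term; single-row swaps $(a,b)=(1,0)$, count $\approx m^*(M-m^*)$ with $k=n^*$, for the second; and macroscopic swaps, whose total number $\binom{M}{m^*}\binom{N}{n^*}$ has log-cardinality $\approx m^*\log(M/m^*)+n^*\log(N/n^*)$, for the third. Here a crude union bound with $\Phi(-t)\le e^{-t^2/2}$ does not suffice: for a single-column swap it would demand $\theta\sqrt{m^*}>2\sqrt{\log n^*+\log(N-n^*)}$, strictly larger than the sharp threshold $\sqrt{2\log n^*}+\sqrt{2\log(N-n^*)}$. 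To recover the sharp constant I would decouple rows from columns: holding the rows at $I^*$, form the column scores $Y_j=(m^*)^{-1/2}\sum_{i\in I^*}X_{ij}$, which are independent $\mathcal N(\theta\sqrt{m^*},1)$ for $j\in J^*$ and $\mathcal N(0,1)$ for $j\notin J^*$; column recovery is then $\min_{j\in J^*}Y_j>\max_{j\notin J^*}Y_j$, and Gaussian extreme-value asymptotics (min of $n^*$ shifted, max of $N-n^*$ unshifted) yield exactly the first term. A symmetric argument with column-averaged row scores gives the second, and a peeling argument over intermediate overlaps, controlled by the third term, shows the jointly optimal $(I,J)$ cannot beat these coordinatewise comparisons.

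For impossibility I would pass to a Bayesian formulation: a prior on the configuration under which the Bayes risk for exact recovery stays bounded away from $0$ rules out any strongly consistent estimator. I would build three hard priors, one per term. For the first, randomize over configurations obtained by replacing a single column, reducing the problem to identifying one shifted coordinate among $N-n^*$ decoys via the scores $Y_j$; when $\theta\sqrt{m^*}<\sqrt{2\log n^*}+\sqrt{2\log(N-n^*)}$ the maximum-likelihood selection errs with probability bounded away from zero, by the matching extreme-value lower bound. The second term is handled symmetrically over single-row replacements, and the third by a Fano argument over a well-separated packing of the configuration space with log-cardinality matching $m^*\log(M/m^*)+n^*\log(N/n^*)$. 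The sharp constants here require, beyond the crude mutual-information bound, a truncated second-moment (truncated likelihood-ratio) computation to control the Gaussian tails precisely.

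The main obstacle, in both directions, is matching the sharp constants rather than merely the correct order, and in particular justifying that the three regimes decouple so that $\theta_0$ is exactly their maximum with no interaction penalty. On the achievability side this is the step where the naive union bound must be replaced by the extreme-value/peeling analysis; on the impossibility side it is the truncated-moment control of the likelihood ratio. Verifying that intermediate overlaps never produce a threshold exceeding the maximum of the three boundary cases --- heuristically, that $(a,b)\mapsto(\text{log-count})/k(a,b)$ is extremized at the boundary regimes --- is the combinatorial heart of the argument.
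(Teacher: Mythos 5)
First, a point of calibration: this paper never proves the statement you were given. Theorem 1 is imported verbatim from Butucea--Ingster--Suslina (Theorems 2.1--2.2 of the cited reference \citep{butucea2015sharp}) and serves only as a benchmark for the paper's own Theorem 2, so the comparison has to be made against that external proof. Measured that way, your outline has the right architecture and gets the preparatory facts right: the gap statistic $D(I,J)\sim\mathcal N(\theta k,2k)$ with $k=m^*b+n^*a-ab$ and failure probability $\Phi(-\theta\sqrt{k/2})$; the correct diagnosis that a pairwise union bound over single-column swaps demands $\theta\sqrt{m^*}>2\sqrt{\log n^*+\log(N-n^*)}$ and therefore cannot reach the sharp constant; the repair via column scores $Y_j$, under which the $a=0$ failures are exactly the event $\min_{j\in J^*}Y_j<\max_{j\notin J^*}Y_j$ and Gaussian extremes produce the first term of $\theta_0$; and, for impossibility, the reduction to a Bayes selection problem over single-column and single-row replacements plus a global argument for the third term. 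All of this is faithful to how the sharp thresholds actually arise.

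The genuine gap is that the two steps you defer are precisely the ones that carry the theorem, and for one of them the fallback you gesture at would fail quantitatively. Once the $a=0$ and $b=0$ events are removed, every remaining candidate has $a\ge1$ and $b\ge1$, where the extreme-value decoupling no longer applies; and a plain union bound with exponent $\theta^2k/4$ loses a factor of about $2$ there. Concretely, for disjoint supports ($a=m^*$, $b=n^*$) one has $k=m^*n^*$ and entropy roughly $m^*\log(M/m^*)+n^*\log(N/n^*)$, while the third term of $\theta_0$ only guarantees $\theta^2m^*n^*/4\ge\big[m^*\log(M/m^*)+n^*\log(N/n^*)\big]/2$ --- half of what that union bound needs; so ``peeling, controlled by the third term'' is not yet an argument. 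The device that closes every regime at once is to split $D(I,J)$ into its two \emph{independent} pieces, the lost sum over $(I^*\times J^*)\setminus(I\times J)$ (mean $\theta k$, variance $k$, entropy governed by which true rows/columns are dropped) and the gained sum over $(I\times J)\setminus(I^*\times J^*)$ (mean $0$, variance $k$, entropy governed by which outside rows/columns are added), and to budget a crossing level between the two tails; optimizing that budget is exactly what produces the sum-of-two-square-roots structure of $\theta_0$ and removes the factor-$2$ loss. Your extreme-value step is the $a=0$ special case of this device, but the proposal never identifies it in general. Symmetrically, on the impossibility side, Fano with a mutual-information bound provably cannot deliver the constant $1$; the truncated likelihood-ratio computation you invoke is not a refinement of the argument, it \emph{is} the argument, and nothing in the proposal indicates how it goes. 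In short: a correct roadmap whose two decisive steps are missing --- it is telling that the present paper, facing the same difficulty for its multiscale statistic, settles in its Theorem 2 for an order-optimal constant $C\approx 4.32$ rather than the sharp one.
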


Condition \eqref{lower} provides a lower bound of the signal strength for the existence of a strongly consistent estimator with \emph{prior knowledge} of the anomaly size $(m^*, n^*)$. 

We consider here the case when the submatrix size $(m^*, n^*)$ is \emph{not known}.  Our strategy is straightforward: we consider a variant of the scan statistic, different from the one defined in \eqref{scan}, which scans submatrices of different sizes.

\begin{con}[Multiscale scan statistic and its property] 
We propose a multiscale scan statistic as an estimator under the parametric model defined in \secref{introlocal}. We prove that under some regularity conditions, this statistic exactly recovers the submatrix with high probability when the signal strength is of the same order of magnitude as that implied by \thmref{bis} under the setting where the submatrix size is known a priori. 
\end{con}

Scanning, even when the submatrix size is known, is computationally difficult.  It is in fact known to be NP-hard \citep[Thm 1]{cheng2000biclustering}. 
To tackle the problem of computing the scan statistic defined in \eqref{scan}, \citep{shabalin2009finding} develops an alternate maximization algorithm called LAS (for Large Average Submatrix).  Based on the LAS algorithm, we develop two algorithms in order to approach our proposed statistic.

\begin{con}
We develop iterative algorithms to solve the problem of finding the proposed multiscale scan statistic. One algorithm adapts the idea of \citep{shabalin2009finding} and is a hill-climbing type optimization algorithm. The other is a variant based on the golden section search. 
\end{con}

\subsection{More related work}

There is an active research line focusing on different aspects of submatrix localization. \citep{arias2017distribution, butucea2013detection} concentrate on the detection of the existence of such an anomalous submatrix. 
Besides \citep{butucea2015sharp, kolar2011minimax}, discussed above, \citep{hajek2017information} also considers the minimal signal strength needed for accurate localization under symmetric data structure, while \citep{hajek2017information,hajek2017submatrix} develop upper and lower bounds for the existence of weakly consistent estimators.  
A convex optimization framework for biclustering is proposed and associated algorithms are developed in \citep{chi2016convex}. 
A selective inference framework for quantifying the information contained within a selected submatrix is proposed by \citep{lee2015evaluating}. 

The computation issue is attracting an increasing amount of attention from researchers in the field, addressing the tradeoff between statistical power and computational tractability \citep{ma2015computational, chen2016statistical, cai2017computational}. 
In this context, a variety of computationally tractable (i.e., running in polynomial time) methods have been proposed and analyzed, such as methods relying on a semidefinite relaxation \citep{chen2016statistical}, spectral methods \citep{cai2017computational}, methods relying on message passing heuristics \citep{hajek2017submatrix}, and more \citep{brault2016fast, tan2014sparse}. 

Another closely related research area is network analysis, for example, stochastic blockmodels \citep{holland1983stochastic}. Similar to submatrix localization, there are works that study the problem of detection \citep{arias2014community,verzelen2015community,zhang2016minimax}, some that study the task of partitioning the graph \citep{mossel2016consistency}, for which spectral and SDP methods have been proposed \citep{chen2016statistical, abbe2016exact, chaudhuri2012spectral, mcsherry2001spectral}.

\subsection{Content}
The remaining of this paper is organized as follows. 
In \secref{mle} we introduce the multiscale scan statistic. 
\secref{algo} introduces the main algorithms for approximating  the proposed statistic: a hill-climbing search algorithm and a golden section search algorithm. Some theory is established The theoretical properties of the (exact) scan statistic are stated in \secref{theory}.
\secref{numerics} presents the result of some numerical simulations.

\section{The multiscale scan statistic}
\label{sec:mle}


The scan statistic as defined in \eqref{scan} requires knowledge of the size of the submatrix, which we have denoted $(m^*,n^*)$.  When this is unknown, we use a criterium for comparing sums over submatrices of different sizes.  For example, if two potential submatrix sizes $(m_1, n_1)$ and $(m_2, n_2)$ are investigated, we need to determine which one of $\scan_{m_1,n_1} (\bX)$ and $\scan_{m_2,n_2} (\bX)$ is more `significant'.  



When comparing two normalized scan statistics, especially when the sizes of the scan statistics differ significantly, we need to consider the effect of background noise into the comparison. 
Inspired by the multiscale testing procedure proposed in \citep{butucea2013detection}, we define our multiscale scan statistic as 
\beq \label{lik2}
\mscan(\bX) = \max_{I \subset [M], J \subset [N]}\bigg\{ \frac{\sum_{(i,j) \in  I \times J} X_{ij}}{\sqrt{|I||J|}} - \lambda_{|I|,|J|} \bigg\},
\eeq
where 
\beq \label{lambda}
\lambda_{m,n} = \sqrt{2\log\bigg[MN{M \choose m} {N \choose n}\bigg]}.
\eeq
Naturally, the corresponding estimator for localizing the anomalous submatrix will be a maximizer of \eqref{lik2}, namely
\beq \label{lik3}
\Phi_{\mscan}(\bX) = \argmax_{I \subset [M], J \subset [N]}\bigg\{ \frac{\sum_{(i,j) \in  I \times J} X_{ij}}{\sqrt{|I||J|}} - \lambda_{|I|,|J|} \bigg\}.
\eeq

\section{Two algorithms}
\label{sec:algo}

\subsection{The LAS algorithm}

The computation of \eqref{lik3} seems to require the scanning of all, or most, submatrices, and there are too many submatrices for this to be possible.  (The total number of submatrices is equal to $2^{M+N}$.) 
This fact makes the direct computation of the multiscale scan statistic computationally intractable. We also note that the computation of \eqref{lik3} is harder than that of \eqref{scan}, and the latter is proved to be NP-hard by \citep{cheng2000biclustering}.  It seems therefore necessary to resort to approximations.

When the true anomaly size $(m^*,n^*)$ is known, \citep{shabalin2009finding} proposed an iterative hill-climbing algorithm, called LAS, to approximate the statistic in \eqref{lik2}; see \algref{shabalin}. 
As a hill-climbing algorithm, it may be trapped in local maxima, so that in practice the algorithm is run on several (random) initializations \citep{butucea2013detection, arias2017distribution}. 

\begin{algorithm}
	\KwIn{submatrix size $(m^*, n^*)$, initial row index set $\hat{I}$ with $|\hat{I}| = m^*$}
	\KwOut{$\hat{I}$, $\hat{J}$}
	Compute the sum $S_j = \sum_{i \in \hat{I}} X_{ij}$, and let $\hat{J}$ be the column index set corresponding to the largest $n^*$ items in $\{S_j \}_{j =1}^N$; \\
	Compute the sum $T_i = \sum_{j \in \hat{J}} X_{ij}$, and let $\hat{I}$ be the row index set corresponding to the largest $m^*$ items in $\{T_i \}_{i=1}^M$; \\
	Repeat \textit{Steps 1 and 2} until convergence.
	\caption{Large Average Submatrix (LAS) \citep{shabalin2009finding}}
	\label{alg:shabalin}
\end{algorithm}

\subsection{The adaptive LAS algorithm}
We modify \algref{shabalin} in order to approximate the multiscale scan statistic.  In principle, we could run \algref{shabalin} on each submatrix size $(m,n) \in [M] \times [N]$, but although computationally feasible, it remains computationally demanding.  Instead, we adopt a more greedy approach.  We still rely on alternatively maximizing the objective, over the column index set, then the row index set, etc, until convergence, but the objective is that of \eqref{lik2}.  See \algref{adapt} for details.  In practice, the algorithm is run on multiple random initializations as well. 

\begin{algorithm}
	\KwIn{initial submatrix size $(m, n)$, initial row index set $\hat{I}$ with $|\hat{I}| = m$}
	\KwOut{$\hat{I}$, $\hat{J}$}
	Run \algref{shabalin} based on the initial input, obtaining $(\hat{I}, \hat{J})$\\
	Compute the sums $S_j = \sum_{i \in \hat{I}} X_{ij}$ and $S^\dagger_j = \frac1{\sqrt{j}} \sum_{k=1}^{j} S_{(k)} - \lambda_{|\hat{I}|, j}$ where $S_{(1)} \ge S_{(2)} \ge \cdots \ge S_{(N)}$ are the ordered $\{S_j \}_{j =1}^N$; \\
	 Let $n$ maximize $S^\dagger_j$ over $j = 1, \dots, N$, and let $\hat{J}$ be the column index set corresponding to the largest $n$ items in $\{S_j \}_{j =1}^N$; \\
	Compute the sums $T_i = \sum_{j \in \hat{J}} X_{ij}$ and  $T^\dagger_i = \frac1{\sqrt{i}} \sum_{l=1}^{i} T_{(l)} - \lambda_{i, |\hat{J}|}$ where $T_{(1)} \ge T_{(2)} \ge \cdots \ge T_{(M)}$ are the ordered $\{T_i\}_{i =1}^M$; \\
	 Let $m$ maximize $T^\dagger_i$ over $i = 1, \dots, M$, and let $\hat{I}$ be the row index set corresponding to the largest $m$ items in $\{T_i\}_{i =1}^M$; \\
	Repeat \textit{Steps 2-5} until convergence.
	\caption{Adaptive LAS}
	\label{alg:adapt}
\end{algorithm}

Note that the initial submatrix size input $(m, n)$ in \algref{adapt} does not necessarily represent any prior knowledge of the anomaly size, although it can be informed by such prior knowledge.

\subsection{A golden section search variant}
We propose a variant based on the (two-dimensional) golden section search, a well-known method for finding the maximum of a unimodal function (see \citep{chang2009n} for a general survey of the algorithm in $N$ dimensions). Define
\beq \label{target}
f_\bX(m,n) = \frac{\scan_{m,n} (\bX)}{\sqrt{mn}} - \lambda_{m,n},
\eeq
so that the multiscale scan statistic is the maximum value of $f_\bX$:
\beq\notag
\mscan(\bX) = \max_{m,n} f_\bX(m,n).
\eeq
We simply apply the two-dimensional golden section search (with some modifications for discreteness) to $f_\bX$.  Details are provided in \algref{gss}.  

The hope is that, if the signal strength is reasonably large and our initial bound on the submatrix size, $(\bar m, \bar n)$ below, is sufficiently accurate, $f_\bX$ is (with high probability) unimodal on $[\bar{m}] \times [\bar{n}]$ with maximizer $(m^*, n^*)$. 
We are not able to support this with a theoretical result, but have performed some numerical experiments, some of them reported in \secref{gss}, that seem to support this.

In the algorithm, instead of computing $f_\bX(m,n)$, we instead apply \algref{shabalin} to provide an approximation.  In the description, $\phi$ is the golden ratio constant, namely, $\phi = 0.5(\sqrt{5} - 1)$. 

\begin{algorithm}
	\KwIn{initial submatrix size $(\bar m, \bar n)$}
	\KwOut{$\hat{I}$, $\hat{J}$}	
	Denote $(m_{\text{min}}, n_{\text{min}} )= (1,1)$, $(m_{\text{max}},n_{\text{max}}) = (\bar m, \bar n)$; \\
	Compute $m_1 =\ceil*{m_{\text{max}} + (m_{\text{min}} - m_{\text{max}}) \phi}, m_2 =\floor*{ m_{\text{min}} + (m_{\text{max}} - m_{\text{min}}) \phi } $, $n_1 = \ceil*{ n_{\text{max}} + (n_{\text{min}} - n_{\text{max}}) \phi}, n_2 =\floor*{  m_{\text{min}} + (n_{\text{max}} - n_{\text{min}}) \phi } $;  \\
	Compute $f_\bX(m_1,n_1), f_\bX(m_2,n_1), f_\bX(m_1,n_2), f_\bX(m_2,n_2)$, denote $(m^\#, n^\#) = \arg \max_{(i,j) \in \{1,2\}^2} f_\bX(m_i,n_j)$ ;\\	
	Following the standard Golden Section Search algorithm for continuous functions, renew $(m_{\text{max}}, n_{\text{max}} )$, $(m_{\text{min}}, n_{\text{min}} )$ based on $(m^\#, n^\#)$;\\
	Repeat \textit{Steps 2-4}, till $\max(m_{\text{max}} - m_{\text{min}}, n_{\text{max}} - n_{\text{min}}) \leq 3$;\\
	Compute $f_\bX(s,t)$ for all $(s,t) \in [m_{\text{min}} , m_{\text{max}}] \times [ m_{\text{min}} ,m_{\text{max}}] \cap \mathbb{N}^2$. Denote the maximal among the results as $f_\bX(\hat{m}, \hat{n})$. Finalize $(\hat{I},\hat{J})$ as the output of \algref{shabalin} with input $(\hat{m}, \hat{n})$.
	\caption{Golden Section Search Algorithm}
	\label{alg:gss}
\end{algorithm}

We note that the algorithm needs to stop the loop when the search frame is $3 \times 3$ or smaller, for otherwise the loop would not break because of the discrete nature in the index sets (the search frame will not shrink). When this happens, the algorithm switches to an exhaustive search.
The initial submatrix size $(\bar{m},\bar{n})$ is ideally an upper bound on the actual submatrix size.  Its choice can be informed by prior knowledge.

\section{Theoretical property}
\label{sec:theory}

\subsection{Gaussian entries}

We establish a performance result for our multiscale scan statistic \eqref{lik3}.  Recall from \secref{mle} that the true anomaly $I^* \times J^*$ has size $(m^*,n^*)$. As before, we allow $\theta$ to change with $(M,N,m^*,n^*)$.
 
\begin{thm}[Exact recovery] \label{thm:theorem}
	Consider the model \eqref{normal} as described in \secref{introlocal} in an asymptotic regime where \eqref{assume} holds.
	Set  
\beq \label{theta1}
\theta_1 
= C\max \bigg\{ 
\sqrt{\frac{\log (M-m^*) + \log m^*}{n^*}} ,  \sqrt{ \frac{\log (N-n^*) + \log n^*}{m^*} } , \frac{\lambda_{m^*, n^*}}{\sqrt{m^*n^*}}, \bigg\},
\eeq
where $C$ is the positive real solution to the equation
\beq \label{constant}
C = 2\big( [C/(C-1)]^{1.5} + [C/(C-1)]^{1.25}\big).
\eeq
If   
\beq \label{cond}
\liminf \frac{\theta}{\theta_1  } > 1,
\eeq
the estimator defined by \eqref{lik3}
is equal to $(I^*, J^*)$ with high probability as $M,N,m^*,n^* \to \infty$.
\end{thm}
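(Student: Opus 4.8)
The plan is to prove that the estimator \eqref{lik3} equals $(I^*,J^*)$ by showing that $(I^*,J^*)$ is, with high probability, the unique maximizer of the penalized objective
\[
g(I,J):=\frac{\sum_{(i,j)\in I\times J}X_{ij}}{\sqrt{|I||J|}}-\lambda_{|I|,|J|}.
\]
The engine of the argument is a single decomposition. Writing $m=|I|$, $n=|J|$, $a=|I\cap I^*|$, $b=|J\cap J^*|$, $W^*=\sum_{I^*\times J^*}\eps_{ij}$ and $W_{I,J}=\sum_{I\times J}\eps_{ij}$, the overlap $(I\cap I^*)\times(J\cap J^*)$ carries all the signal seen by $(I,J)$, so
\[
g(I^*,J^*)-g(I,J)=\underbrace{\theta\Big(\sqrt{m^*n^*}-\tfrac{ab}{\sqrt{mn}}\Big)}_{=:D\ \ge\ 0}+\big(\lambda_{m,n}-\lambda_{m^*,n^*}\big)+\Big(\tfrac{W^*}{\sqrt{m^*n^*}}-\tfrac{W_{I,J}}{\sqrt{mn}}\Big).
\]
Here $D\ge 0$ because $\tfrac{a}{\sqrt m}\le\sqrt a\le\sqrt{m^*}$ and likewise for the columns. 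The whole proof then reduces to a union bound, organized by the type $(m,n,a,b)$, showing this gap is positive simultaneously over all competitors.

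I would next observe that the penalty \eqref{lambda} is calibrated for exactly this union bound. Since $W_{I,J}/\sqrt{mn}$ is standard normal and there are $\binom{M}{m}\binom{N}{n}$ submatrices of size $(m,n)$, the choice $\lambda_{m,n}^2=2\log[MN\binom{M}{m}\binom{N}{n}]$ makes the expected number of size-$(m,n)$ submatrices whose normalized noise exceeds $\lambda_{m,n}$ at most $1/(MN)$, and the surplus factor $MN$ inside the log pays for the outer union bound over the at most $MN$ admissible sizes. This disposes of the \emph{far} competitors cheaply: when the overlap is small, $D$ already recovers most of the truth's signal, while if the size is far from $(m^*,n^*)$ the penalty gap is large; in either case the deterministic part of the gap dominates the essentially independent noise. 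The third term of \eqref{theta1}, namely $\lambda_{m^*,n^*}/\sqrt{m^*n^*}$, plays the complementary role of guaranteeing $g(I^*,J^*)>0$, so that the empty and tiny submatrices cannot win.

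The heart of the argument, and the step I expect to be the main obstacle, is the family of \emph{near-misses}: competitors obtained from $(I^*,J^*)$ by exchanging a small number $k$ of rows and $\ell$ of columns. Here both the penalty gap and $D$ are small, so the inequality is genuinely tight, and, crucially, $W^*$ and $W_{I,J}$ share most of their summands, so one cannot treat them as independent but must instead control the structured difference $W^*/\sqrt{m^*n^*}-W_{I,J}/\sqrt{mn}$. After cancellation this difference has variance of order $k/m^*+\ell/n^*$, while the signal deficit is $D=\theta\,[k\sqrt{n^*/m^*}+\ell\sqrt{m^*/n^*}-k\ell/\sqrt{m^*n^*}]$. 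Balancing $D$ against $\sqrt{(\text{variance})\cdot 2\log(\text{count})}$, with count $\binom{m^*}{k}\binom{M-m^*}{k}\binom{n^*}{\ell}\binom{N-n^*}{\ell}$, reproduces exactly the first two terms of \eqref{theta1}: the $k$-row swaps give the row term $\sqrt{(\log(M-m^*)+\log m^*)/n^*}$ and the $\ell$-column swaps the symmetric column term. The delicate point is that $k$ and $\ell$ appear to leading order on both sides, so a single constant must certify the inequality simultaneously for all swap sizes and all mixtures of row and column errors; it is this joint optimization---tracking the cross term $-k\ell/\sqrt{m^*n^*}$ in $D$, the sub-leading corrections in the binomial counts, and the worst-case geometry of the overlap---that forces the constant to be the solution $C$ of the fixed-point equation \eqref{constant}, with the exponents $1.5$ and $1.25$ recording the two distinct ways the swapped noise and its multiplicity enter the balance.

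To conclude, I would sum the failure probabilities over all types $(m,n,a,b)$, using the penalty-driven bound in the bulk and the swap-driven bound near $(m^*,n^*,m^*,n^*)$, together with a single Gaussian tail bound showing $W^*/\sqrt{m^*n^*}=O_P(1)$ is negligible against $\lambda_{m^*,n^*}\to\infty$ under \eqref{assume}. Under the hypothesis \eqref{cond} that $\liminf\theta/\theta_1>1$, the deterministic part of the gap exceeds the noise union bound in every type with a fixed multiplicative margin, so the total failure probability tends to $0$, which is the claimed exact recovery.
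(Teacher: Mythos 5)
Your proposal takes essentially the same route as the paper's own proof strategy: a union bound over the failure events \eqref{event}, organized by the size and overlap of the competitor (your type $(m,n,a,b)$ mirrors the paper's affinities $\rho_{m,n}$ and $\upsilon_{m,n,s,t}$), with the penalty $\lambda_{m,n}$ eliminating far competitors and the critical case being the near-misses obtained by exchanging a few rows or columns, whose failure probability the paper likewise identifies (via the one-row/one-column swap) as the dominant one. Your decomposition of the gap, the variance computation of order $k/m^*+\ell/n^*$ for the correlated noise difference, and the balancing that reproduces the first two terms of \eqref{theta1} are all consistent with the paper's argument, so this is a faithful reconstruction of the same approach.
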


\begin{rem}
We note that $C \approx 4.32$. By comparing the terms defining $\theta_1$ with the terms defining $\theta_0$ (first defined in \thmref{bis}), it is easy to see that $\theta_1 / \theta_0 = O(1)$.
\end{rem}

\begin{rem}
The first two terms in the maximum \eqref{theta1} are associated with the row and column structures of the anomaly and data matrix, while the last part is associated with the existence of weak consistent estimators. In fact, \citep{hajek2017information, hajek2017submatrix} show that when $(m^*, n^*)$ are known, the scan statistic \eqref{scan} is weakly consistent if 
\beqn 
\liminf \theta \bigg\{ \frac{\lambda_{m^*, n^*}}{\sqrt{m^*n^*}}, \bigg\}^{-1} > 1.
\eeqn
(There is weak consistence if $\E (|(I^*  \times J^*) \symd \Phi_{\scan} | / m^*n^*) \to 0$, where $\symd$ denotes the set symmetric difference.) 
\end{rem}

\begin{proof}[Proof strategy]

Denote $S^* = I^* \times J^*$, which identifies the submatrix. 
Key to the proof is to bound the probability that, for some other submatrix $S$ of size $(m,n)$,
\beq \label{event}
\frac{\sum_{(i,j) \in  S} X_{ij}}{\sqrt{mn}} - \lambda_{m,n} > \frac{\sum_{(i,j) \in S^*} X_{ij}}{\sqrt{m^*,n^*}} - \lambda_{m^*,n^*}.
\eeq
The union of such events, as $S$ runs through all matrices, is exactly the event where the multiscale scan fails at exact recovery.

We will start by defining some quantities which quantify the closeness between the candidate matrix $S$ and true anomaly $S^*$.
For the submatrices in $\cS_{m,n}$, which is the collection of all submatrices with dimensions $(m,n)$, define their affinity in size as
\beq \notag
\rho_{m,n} = \max_{S \in\, \cS_{m,n}}  \frac{|S \cap S^*|}{\sqrt{|S| |S^*|}} = \frac{(m \wedge m^*)}{\sqrt{m m^*}} \frac{(n \wedge n^*)}{\sqrt{n n^*}}.
\eeq

For a specific submatrix $S$ with size $(m,n)$, such that $S \cap S^*$ is a $s\times t$ submatrix, define the affinity of coverage as
\beq \notag
\upsilon_{m,n,s,t} = \frac{st}{\sqrt{mm^*nn^*}}.
\eeq

The proof strategy is that, with probability converging to $1$, we can sequentially rule out the submatrices with low affinity in size and low affinity in coverage.  We then analyze those submatrices left behind, which have size and coverage comparable with the true anomaly. 

In fact, the introduction of the regularizer $\lambda_{m,n}$ in the expression of the multiscale scan statistic is there to eliminate the submatrices with low affinity in size or in coverage. While when the affinities are not to small (bounded from zero by a positive number), the number of candidate matrices is controlled. We measure the probability of \eqref{event} when $S$ and $S^*$ only differ by one column or one row, and show that this is the largest failure probability among all candidates that have affinities bounded from below. 
\end{proof}

\subsection{An extension to an exponential family}
\label{sec:exponential}

In practice the Gaussian assumption is not satisfied in many situations. For example the entries may only take integer values (as with counting data) or binary data (as with presence-absence matrices in Ecology \citep{gotelli2000null}). Therefore it is of importance to extend the result to a distribution family that covers several common data types. Following \citep{butucea2013detection,arias2018distribution,arias2017distribution}, we consider a one parameter exponential family.  Let $\nu$ denote a distribution on the real line with mean zero and variance $1$. Denote $\varphi(\theta)$ as the moment generating function of $\nu$, meaning $\varphi(\theta) = \int e^{\theta t } \nu (dt) $, and suppose that $\varphi(\theta) < \infty$ for all $\theta$ in $[0, \theta_\star)$. Here $\theta_\star$ is defined as $\sup \{ \theta: \varphi(\theta) < \infty\}$ and could be equal to infinity. For $\theta \in [0, \theta_\star)$, define 
\beq \label{family}
f_\theta (x)= \exp \{ \theta x  - \log \varphi(\theta) \},
\eeq
which is a density with respect to $\nu$.

By selecting $\nu$ appropriately, the distribution family becomes the normal location family, the Poisson family, or the Rademacher family. Note that with $\nu$ fixed, the distribution family is stochastically monotone in $\theta$ (\cite{MR2135927}, Lemma 3.4.2), which is to say, for $X_1 \sim f_{\theta_1}$ and $X_2 \sim f_{\theta_2}$ with $0\leq \theta_2 \leq \theta_1 \leq \theta_\star$ and fixed $\nu$,
\beq \label{monotone}
\P (X_1 \geq t) \geq \P(X_2 \geq t), \forall t \in \mathbb{R}.
\eeq 
This fact enables us to model the submatrix localization problem with $\theta$ controlling the signal strength. With $\theta$ increasing, the anomaly is more 'anomalous', making the localization problem relatively easier to solve. The localization problem can now be formalized with $\nu$ as the role of noise, and entries in the anomaly are distributed as $f_\theta$. Formally, 
\beq \label{exponential}
X_{ij} \sim \left\{
                \begin{array}{ll}
                  f_\theta \quad  (i,j) \in I^* \times J^*\\
                  \nu \quad  (i,j) \notin I^* \times J^*
                \end{array}
              \right.
\eeq 

\begin{thm}[Lower bound under exponential family assumption] \label{thm:exponential1}
Under the assumption of \eqref{family} and suppose \eqref{assume} holds. 
Additionally, we assume 
\beq \label{addass1}
\frac{\max (\log M, \log N)}{\min (m^* , n^*)} \to 0.
\eeq 
Then if 
\beq\notag
\theta \le C \max \bigg( \sqrt{\frac{\log M}{n^*}}, \sqrt{\frac{\log N}{m^*}} \bigg),
\eeq
with $C < 1/\sqrt{8}$, even with full knowledge of $(m^*, n^*)$, there does not exist a strongly consistent estimator of $(I^*, J^*)$.
\end{thm}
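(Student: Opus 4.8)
The plan is to prove impossibility by a Bayes-risk argument, exhibiting a prior under which even the optimal procedure is inconsistent. I would put the uniform prior $\pi$ over all configurations $I\times J$ with $|I|=m^*$, $|J|=n^*$; under $\pi$ and the exact-recovery ($0$--$1$) loss the Bayes rule is the maximum-a-posteriori estimator, which here equals the maximum-likelihood estimator. Because under \eqref{exponential}--\eqref{family} the likelihood of a configuration $S$ is proportional to $\exp\{\theta\sum_{(i,j)\in S}X_{ij}\}$ and all configurations share the cardinality $m^*n^*$, this rule is exactly the max-sum estimator $\Phi_{\scan}$; as the Bayes rule it is optimal among \emph{all} estimators, including those granted full knowledge of $(m^*,n^*)$, $\theta$ and $\nu$. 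Since the minimax risk is at least the Bayes risk, and since the model and $\Phi_{\scan}$ are equivariant under row/column permutations so that $\P_S(\Phi_{\scan}\ne S)$ does not depend on $S$, it would suffice to show that for the planted $S^*=I^*\times J^*$ one has $\P(\Phi_{\scan}\ne S^*)\to1$.

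To bound this failure probability from below I would reduce to a single column exchange, the row exchange being symmetric by transposition. Assume the maximum in the hypothesis is attained by the column term, so $\theta\le C\sqrt{\log N/m^*}$ with $C<1/\sqrt8$. Writing $U_j=\sum_{i\in I^*}X_{ij}$ for the column sums over the true rows, the key observation is that if $\max_{j\notin J^*}U_j>\min_{j\in J^*}U_j$ then replacing the minimizing signal column by the maximizing noise column yields a configuration of the same size with a strictly larger total, so $S^*$ is not the max-sum submatrix and $\Phi_{\scan}\ne S^*$. Thus $\P(\Phi_{\scan}\ne S^*)\ge\P(\max_{j\notin J^*}U_j>\min_{j\in J^*}U_j)$, and the task becomes to show this event has probability tending to one.

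The two sides would be estimated separately. Each of the $N-n^*$ noise columns has $U_j$ equal to a sum of $m^*$ i.i.d.\ copies of $\nu$; assumption \eqref{addass1} gives $\log N=o(m^*)$, placing us in the moderate-deviation regime. A change-of-measure (exponential tilting) lower bound on the upper tail yields $\P(U_j\ge x\sqrt{m^*})\ge\exp\{-(1+o(1))x^2/2\}$ for $x$ of order $\sqrt{\log N}$; with $x=(1-\delta)\sqrt{2\log N}$ the expected number of noise columns exceeding this level diverges, and independence across columns then forces $\max_{j\notin J^*}U_j\ge(1-\delta)\sqrt{2m^*\log N}$ with probability tending to one. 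For the signal side, $\min_{j\in J^*}U_j$ is at most the empirical average of the $n^*$ signal sums, which concentrates at $m^*(\log\varphi)'(\theta)$; since $\theta\to0$ we have $(\log\varphi)'(\theta)=\theta(1+o(1))$, and $\theta\le C\sqrt{\log N/m^*}$ with $C<1/\sqrt8$ makes this at most $\tfrac{1}{\sqrt{8}}\sqrt{m^*\log N}(1+o(1))$, i.e.\ about a quarter of the noise maximum. The gap then makes $\max_{j\notin J^*}U_j>\min_{j\in J^*}U_j$ with probability tending to one, which completes the argument.

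The main obstacle is the one-sided lower bound on the upper tail of a single noise column sum for an arbitrary mean-zero, unit-variance law in the family \eqref{family}: upper tail bounds are routine, but lower-bounding a probability is delicate and is exactly what forces the exponential-tilting argument, using the finiteness of $\varphi$ on $[0,\theta_\star)$ and, for the control of the tilted mean and variance, the monotonicity \eqref{monotone}; assumption \eqref{addass1} is what keeps the tilt small and makes the $o(1)$ in the exponent uniform. The deliberately loose constant $C<1/\sqrt8$ is chosen so that these deviation estimates need only be order-correct: because the signal mean sits a full factor below the noise floor, no sharp analysis of the rate function (and hence no matching of constants with $\theta_0$ or $\theta_1$) is required.
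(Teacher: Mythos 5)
Your proposal is sound, but it takes a genuinely different route from the paper. The paper's proof is information-theoretic and deliberately short: it considers the true configuration $S^* = [m^*]\times[n^*]$ together with its single-column perturbations $[m^*]\times\{[n^*-1]\cup\{j\}\}$, establishes the Kullback--Leibler estimate $\mathcal{D}(\mathcal{P}_0\|\mathcal{P}_j) \geq m^*\theta^2(1+o(1))$ --- which for the family \eqref{family} reduces to the clean identity $m^*\theta\,(\log\varphi)'(\theta)$, asymptotically $m^*\theta^2$ once \eqref{addass1} forces $\theta\to 0$ --- and then substitutes this for display (15) in the proof of Theorem 1 of \citep{kolar2011minimax}, outsourcing all of the hypothesis-testing (Fano-type) machinery to that reference. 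You instead argue directly: you identify the max-sum scan $\Phi_{\scan}$ as the Bayes rule under the uniform prior, so that its failure lower-bounds the risk of every estimator (including those knowing $(m^*,n^*)$, $\theta$, $\nu$), and you show it fails with probability tending to one via a column-swap reduction, pitting an exponential-tilting moderate-deviation lower bound for the noise-column maximum (of order $\sqrt{2m^*\log N}$) against the signal-column average (at most $C\sqrt{m^*\log N}(1+o(1))$). Both arguments are valid; yours is self-contained and in fact proves more --- failure with probability tending to one, and for any $C<\sqrt{2}$ rather than only $C<1/\sqrt{8}$ --- while the paper's is far shorter because its only new work is the KL computation, the deviation analysis being hidden inside the cited proof. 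Two small corrections to your write-up: the stochastic monotonicity \eqref{monotone} is not what controls the tilted mean and variance --- what you actually need is that $\log\varphi$ is smooth at $0^+$ with $\log\varphi(t)=t^2/2+o(t^2)$, which follows from $\nu$ having mean zero, unit variance, and finite moment generating function on $[0,\theta_\star)$; and for the discrete members of the family (Poisson, Rademacher) the argmax can be tied with positive probability, so the swap step should be phrased, as you essentially do, through the strict inequality $\max_{j\notin J^*}U_j>\min_{j\in J^*}U_j$, which rules out $S^*$ being a maximizer regardless of tie-breaking.
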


\begin{proof}[Proof Sketch of \thmref{exponential1}]
The proof is following most of the arguments in the proof of Theorem 1 in \citep{kolar2011minimax}.  The result focuses on the normal location family, but extends to other one-parameter exponential families. We aim to prove the following:
\beq\label{KL}
\mathcal{D} (\mathcal{P}_0 ||\mathcal{P}_j  ) \geq m^*\theta^2(1 + o(1)),
\eeq
where $\mathcal{D} (\cdot || \cdot) $ is the Kullback Leibler divergence between two distribution, and $\mathcal{P}_0, \mathcal{P}_j$ are the distributions of data $\bX$ when $S^* = [m^*] \times [n^*]$ and $S^* = [m^*] \times \{[n^*-1] \cup \{j\}\}$, with $j \in \mathbb{N} \cap [n^*+1,N]$, respectively. Replace (15) in the proof of Theorem 1 in \cite{kolar2011minimax} with the above inequality, and the result follows.
\end{proof}

This result also partially answers the open problem raised by \citep{butucea2015sharp} asking for a lower bound under a general exponential family.

Before we head into evaluating the performance of our proposed estimator \eqref{lik3} applied under the exponential family assumption, we need to re-define the adjusting quantity in \eqref{lambda}. 
Fixing a constant $\delta > 0$, we re-define $\lambda_{m,n}$ as 
\beq \label{lambda2}
\lambda_{m,n} = \sqrt{(2+ \delta)\log\bigg[MN{M \choose m} {N \choose n}\bigg] }.
\eeq
The multiscale scan statistic, and its associated estimator of the anomalous submatrix, is defined again according to \eqref{lik2} and \eqref{lik3}.

\begin{thm}[Exact recovery under exponential family] \label{thm:exponential2}
Assume the model \eqref{family}, and suppose that \eqref{assume} and \eqref{addass1} hold.  Take $\underline{m}$ and $\underline{n}$ such that $\log(N) = o(\underline{m})$ and $\log(M) = o(\underline{n})$.
Suppose we scan over submatrices of size $(m,n)$ such that $m \ge \underline{m}$ and $n \ge \underline{n}$, and that the anomalous submatrix is among these, meaning that $m^* \ge \underline{m}$ and $n^* \ge \underline{n}$.
Suppose $\theta_1$ is as in \thmref{theorem}.
If 
\beq\notag
\liminf \frac{\theta}{  \theta_1 } > 1,
\eeq
with high probability as $M,N,m,n \to \infty$, this scan returns the anomalous submatrix.
\end{thm}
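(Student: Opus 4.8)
The plan is to follow the architecture of the proof of \thmref{theorem} essentially verbatim, replacing every Gaussian tail estimate by a moderate-deviation estimate for sums of the exponential-family variables. Exact recovery fails precisely when event \eqref{event} holds for some admissible submatrix $S \neq S^*$, so by a union bound it suffices to show that $\sum_{(m,n)} \sum_{S \in \cS_{m,n}} \P(\text{\eqref{event} for } S) \to 0$, where the outer sum runs over sizes with $m \ge \underline m$, $n \ge \underline n$. I would organize the inner sum exactly as in the Gaussian case: first discard submatrices of low affinity in size $\rho_{m,n}$ or low affinity of coverage $\upsilon_{m,n,s,t}$ --- for these the penalty gap $\lambda_{m,n} - \lambda_{m^*,n^*}$ already dominates the centered fluctuation, so they are killed by the union bound --- and then treat the remaining submatrices, which are close to $S^*$, reducing as before to the extremal one-row-off and one-column-off configurations.

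The substantive new ingredient is the probabilistic input. Write any sum appearing in \eqref{event} as a sum of $\ell$ IID copies of $\nu$ (over the pure-noise cells $S \setminus S^*$) plus a sum of IID copies of $f_\theta$ (over the signal cells $S \cap S^*$). Since $\nu$ has mean zero and unit variance with MGF finite on $[0,\theta_\star)$, and since the mean of $f_\theta$ is $(\log\varphi)'(\theta) = \theta(1 + o(1))$ with $\theta \to 0$ in the present regime, a Chernoff/Cram\'er computation yields, for a normalized centered sum over $\ell$ cells and threshold $t = o(\sqrt{\ell})$,
\[
\P\Big( \tfrac{1}{\sqrt\ell}\textstyle\sum_{k=1}^{\ell}(Y_k - \E Y_k) > t \Big) \le \exp\big(-\tfrac{t^2}{2}(1 + o(1))\big),
\]
i.e.\ the moderate-deviation regime reproduces the Gaussian bound up to a $(1+o(1))$ factor in the exponent. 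The crucial point is that \emph{every} relevant sum is over at least $\underline m$ or $\underline n$ cells, and the thresholds that arise are of order $\sqrt{\log[{M \choose m}{N \choose n}]}$; the hypotheses $\log N = o(\underline m)$, $\log M = o(\underline n)$ (together with \eqref{assume}, \eqref{addass1}) guarantee $\log[{M \choose m}{N \choose n}] \lesssim m\log M + n\log N = o(mn)$, so $t = o(\sqrt\ell)$ holds uniformly and the approximation is legitimate throughout the scan.

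Finally I would explain the role of the enlarged penalty \eqref{lambda2}: replacing $2$ by $2+\delta$ creates exactly the slack needed to absorb the $(1+o(1))$ factor in the moderate-deviation exponent. Each per-submatrix bound behaves like $\exp(-\tfrac12 \lambda_{m,n}^2(1+o(1))) = \exp(-\tfrac{2+\delta}{2}\log[MN{M\choose m}{N\choose n}](1+o(1)))$, so after multiplying by the cardinality ${M\choose m}{N\choose n}$ of $\cS_{m,n}$ one is left with $\exp(-(\delta/2 - o(1))\log[\cdots])$, which still tends to zero with room to spare. Combined with the slack $\liminf \theta/\theta_1 > 1$, which controls the signal side (the mean shift $st\cdot\theta(1+o(1))$ over the coverage cells via $\upsilon_{m,n,s,t}$), the same bookkeeping as in \thmref{theorem} drives the total failure probability to zero.

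The main obstacle is the uniformity of the moderate-deviation estimate: I must verify that the threshold stays $o(\sqrt\ell)$ not just for the nearly-correct submatrices but simultaneously across all sizes $(m,n)$ and all coverage patterns $(s,t)$ that survive the affinity pruning, and that the $(1+o(1))$ in the exponent can be made uniform in these parameters. This is where the structural hypotheses on $\underline m, \underline n$ do the real work, and where the argument differs most from the clean Gaussian computation.
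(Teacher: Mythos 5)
Your proposal is correct and takes essentially the same route as the paper: the paper's own proof of this theorem is explicitly just the proof of \thmref{theorem} (union bound over events \eqref{event}, pruning by affinity in size and affinity of coverage, reduction to the one-row-off/one-column-off configurations) combined with a normal approximation for the exponential-family sums, which is precisely your moderate-deviation/Cram\'er estimate. Your further observations --- that the enlarged penalty with $2+\delta$ in \eqref{lambda2} absorbs the $(1+o(1))$ factor in the moderate-deviation exponent, and that the hypotheses $\log N = o(\underline{m})$, $\log M = o(\underline{n})$ together with \eqref{addass1} keep every relevant threshold in the regime $t = o(\sqrt{\ell})$ uniformly over the scanned sizes --- are exactly the roles these assumptions play in the paper's argument.
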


\begin{rem}
There is a normal approximation underneath which drives the asymptotic behavior of the statistic to be close to that under the normal model.  This explains why the result is similar under a general exponential family as it is under the normal family. In fact, the proof of \thmref{exponential2} is otherwise essentially the same as that of \thmref{theorem}.
\end{rem}

\section{Numerical experiments}
\label{sec:numerics}

\subsection{Performance of Proposed Algorithms}
We performed some simulation experiments to evaluate the performance of our proposed algorithms.  We generate a matrix with independent entries according to \eqref{normal} as well as \eqref{exponential}, and perform the proposed algorithms (Adaptive LAS and Golden Section Search LAS) on the generated data. 
By permutation invariance, we simply choose $I^* = [m^*]$ and $J^* = [n^*]$.
We then measure the accuracy of an estimator $(\hat{I}, \hat{J})$ as follows
\beq \label{errmea}
\text{Err} ( \hat{I}, \hat{J}) = \log (|\hat{I} \symd [m^*]| + |\hat{J} \symd [n^*]| + 1 ).
\eeq

We compare the proposed algorithms with two computationally tractable algorithms with proved consistency. 
\begin{itemize}
\item Spectral algorithm of \citep{cai2017computational}. The algorithm computes the singular value decomposition of the data matrix $\bX$, then applies $k$-means algorithm on the first left and right singular vectors with number of clusters set to $k = 2$, and clusters the row and column indices according to corresponding singular vector's clustering results.
\item Greatest Marginal Gap method of \citep{brault2016fast}. The algorithm calculates row and column sums of the data matrix $\bX$, denoted as $\{RS_i\}$ and $\{CS_i\}$, then sorts the row and column sums, obtaining $\{RS_{(i)}\}$ and $\{CS_{(i)}\}$. The algorithm clusters rows based on $i_R = \arg\max_i RS_{(i+1)} - RS_{(i)}$, yielding two sets $\{RS_{(1)}, \ldots, RS_{(i_R)}\}$ and $\{RS_{(i_R + 1)}, \ldots, RS_{(M)}\}$. The clustering of columns is analogous.
\end{itemize}

We consider a balanced setting where $(M,N,m^*,n^*)$ is taken to be $(1000,1200,170,140)$, and also an unbalanced setting where $(M,N,m^*,n^*)$ is taken to be $(4000,500,70,250)$. 
For each fixed signal strength, the data is independently generated $30$ times and the error counts defined by \eqref{errmea} for all four the algorithms are recorded. Three one-parameter exponential models (normal, Poisson, Rademacher) are investigated to illustrate the performance of the algorithms under different distributions.
See \figref{plot1} and \figref{plot2}. 

\subsubsection{Signal strength}
The signal strength is quantified by the parameter $\theta$ of the entries inside the submatrix. Denote the following quantity $\theta_{\text{crit}}$:
\beq\notag
\theta_{\text{crit}} = \max \bigg( \sqrt{\frac{\log M}{n^*}} ,\sqrt{\frac{\log N}{m^*}}, \sqrt{\frac{\log M + \log N}{m^* + n^*}}\bigg).
\eeq
This is part of the quantity on the left hand-side of \eqref{cond}, inside \thmref{theorem}. 
We zoom in to the interval $[1.0 \times \theta_{\text{crit}} , 4.0 \times \theta_{\text{crit}}]$. The signal strength is increased in the process of simulation, each time by $0.1 \times \theta_{\text{crit}}$.

\subsubsection{Simulation result}

The two proposed algorithms perform similarly under the balanced design, across all three data types. See \figref{plot1}. Adaptive Hill-Climbing has a slightly weaker performance when the signal is weak (around $1\times \theta_{\text{crit}}$ to $1.5\times \theta_{\text{crit}}$), but the two algorithms' error rates shrink to zero at the same signal strength, showing that the two approximate algorithms return the same result when signal strength goes beyond some threshold, and the result would exactly recover the planted anomaly.

As \figref{plot2} shows, the two algorithms perform differently under the imbalanced design when the signal is weak, with the Golden Section Search performing better. This is due to the fact that the search space of GSS is smaller, as well as the existence of local maximums when the design is imbalanced (see \figref{gss} for an example). However, when considering the successful rate of exact recovery, the signal strength, above which the algorithm's error rate shrinks to zero is still similar for both algorithms.

It is worth mentioning that in both cases, our proposed methods
outperform the other two computationally tractable methods. In
the balanced case, both Adaptive Hill-Climbing and Golden Section Search beat the spectral method by a small margin in error rate, while the error rate of Greatest Marginal Gap method is much worse in the examined signal strength region. In the imbalanced case, Adaptive Hill-Climbing has similar performance with spectral method in the weak signal region, while the GSS again outperforms both, showing that the multiscale scan statistic has better discovery power compared to spectral method. Again, the error rate of Greatest Marginal Gap is much larger.

\subsection{Computing time}

Here we present the computing time for executing the programs described in the previous section. The simulated data is from model \eqref{normal}, with corresponding sizes $(M,N,m^*, n^*) = (1000,1000,100,100)$ and signal strength $\theta = 2.5\times \theta_{\text{crit}}$. We generate data under this model $100$ times, and each time record the computation time for each of the competing methods.

\clearpage
\begin{figure*}%
	\includegraphics[width=0.9\textwidth]{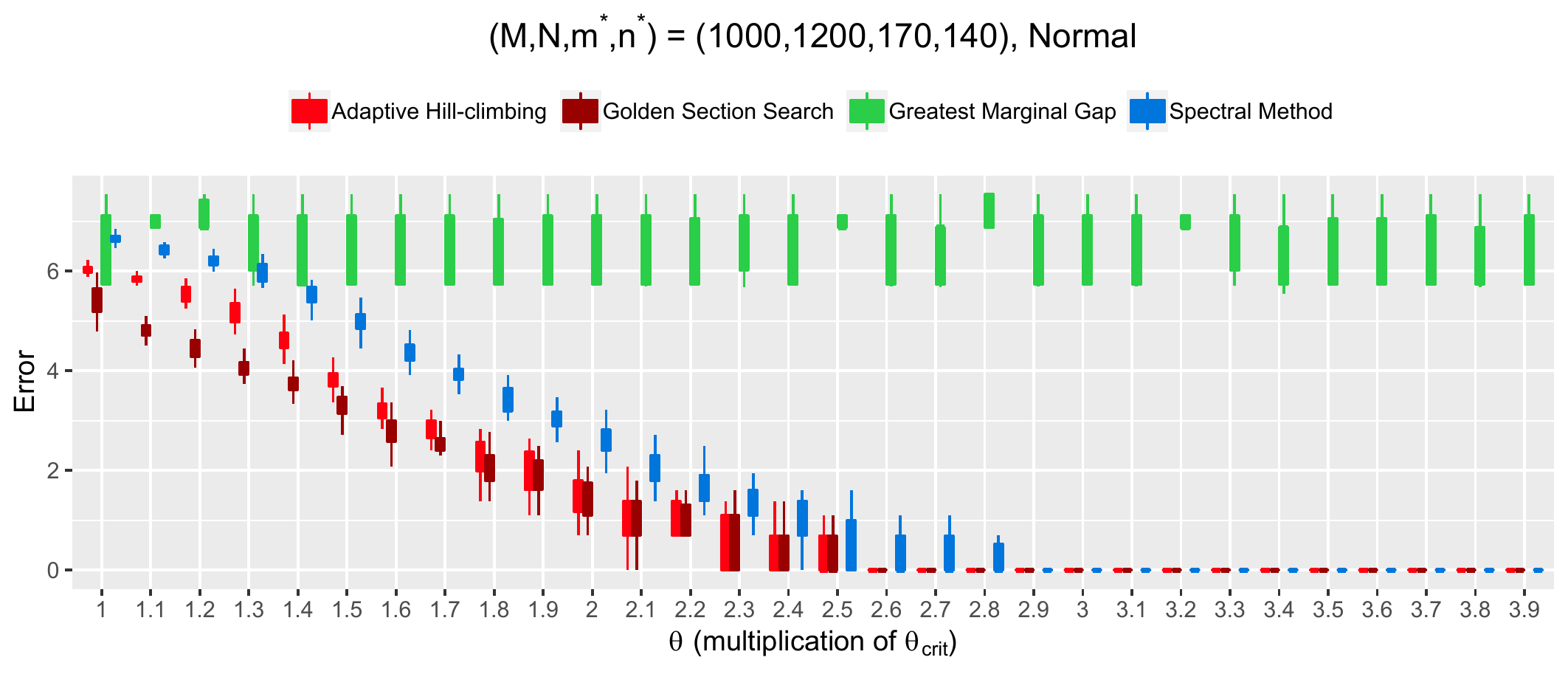} \\%

	\includegraphics[width=0.9\textwidth]{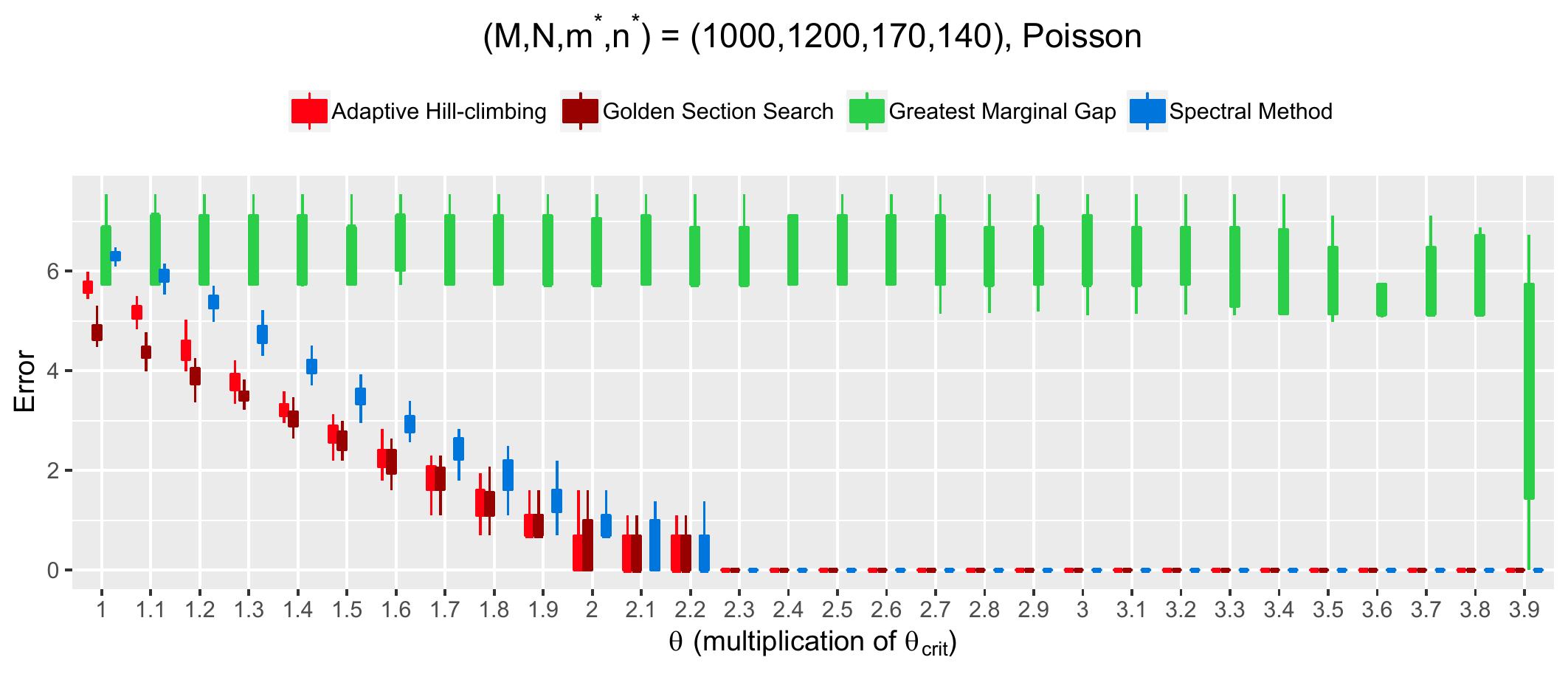} \\%

	\includegraphics[width=0.9\textwidth]{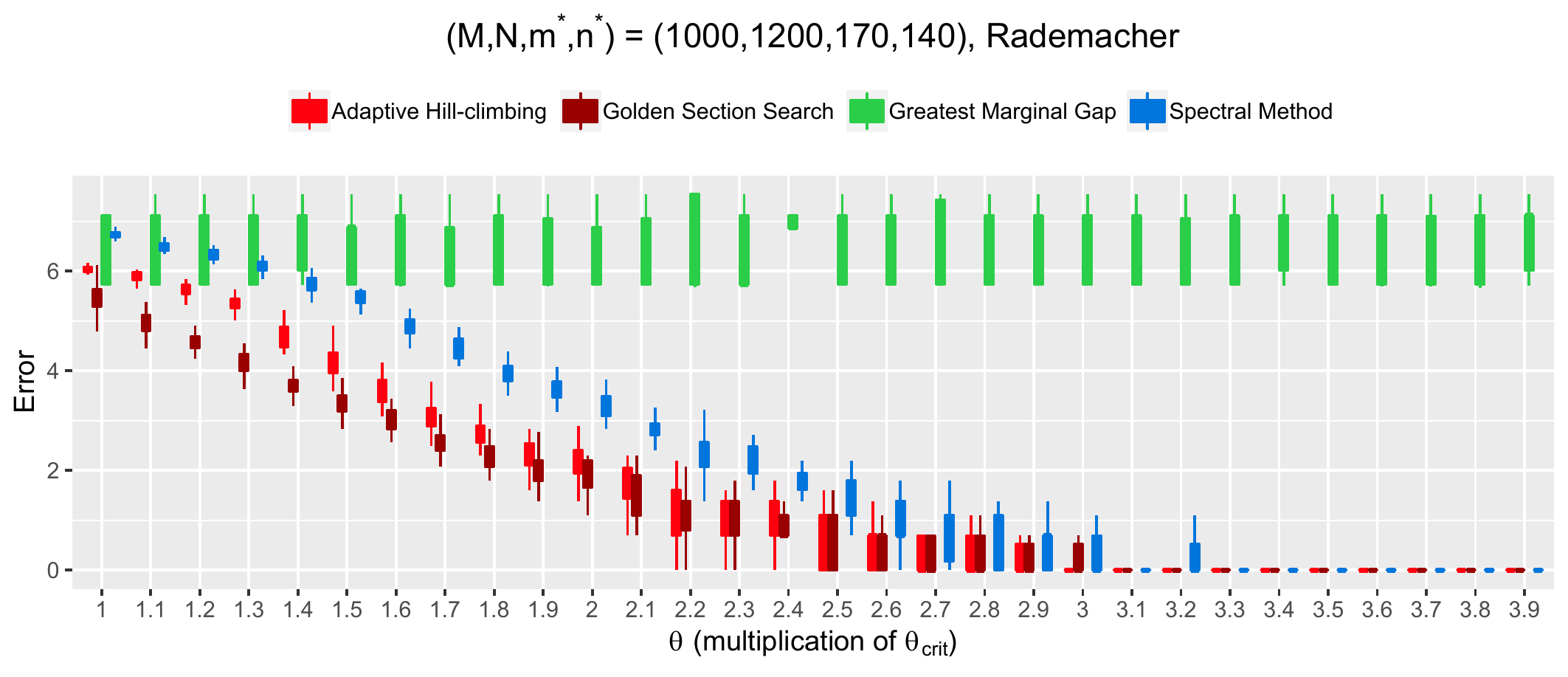} \\%
	\caption{Error counts for a balanced design}%
	\label{fig:plot1}%
\end{figure*}

\clearpage

\begin{figure*}
	
	\includegraphics[width=0.9\textwidth]{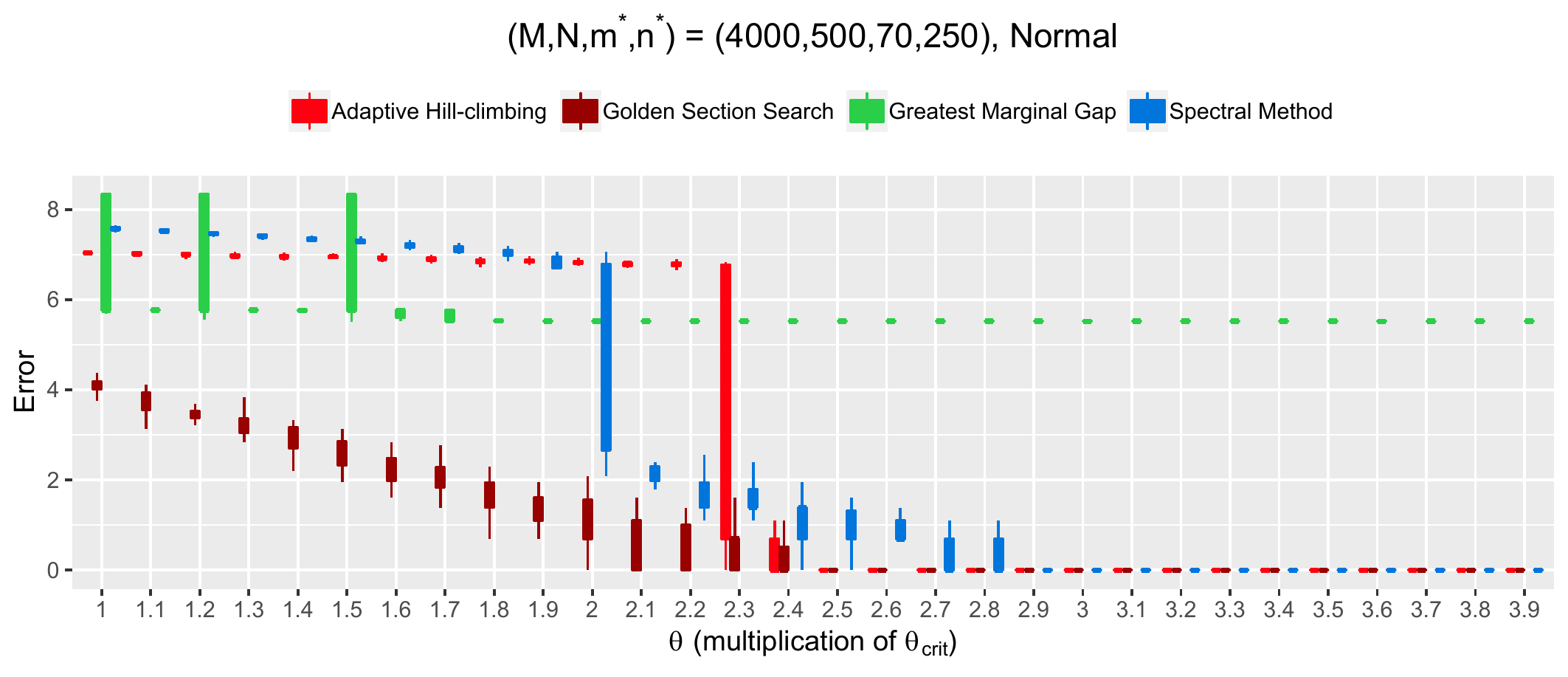} \\%
	 
	\includegraphics[width=0.9\textwidth]{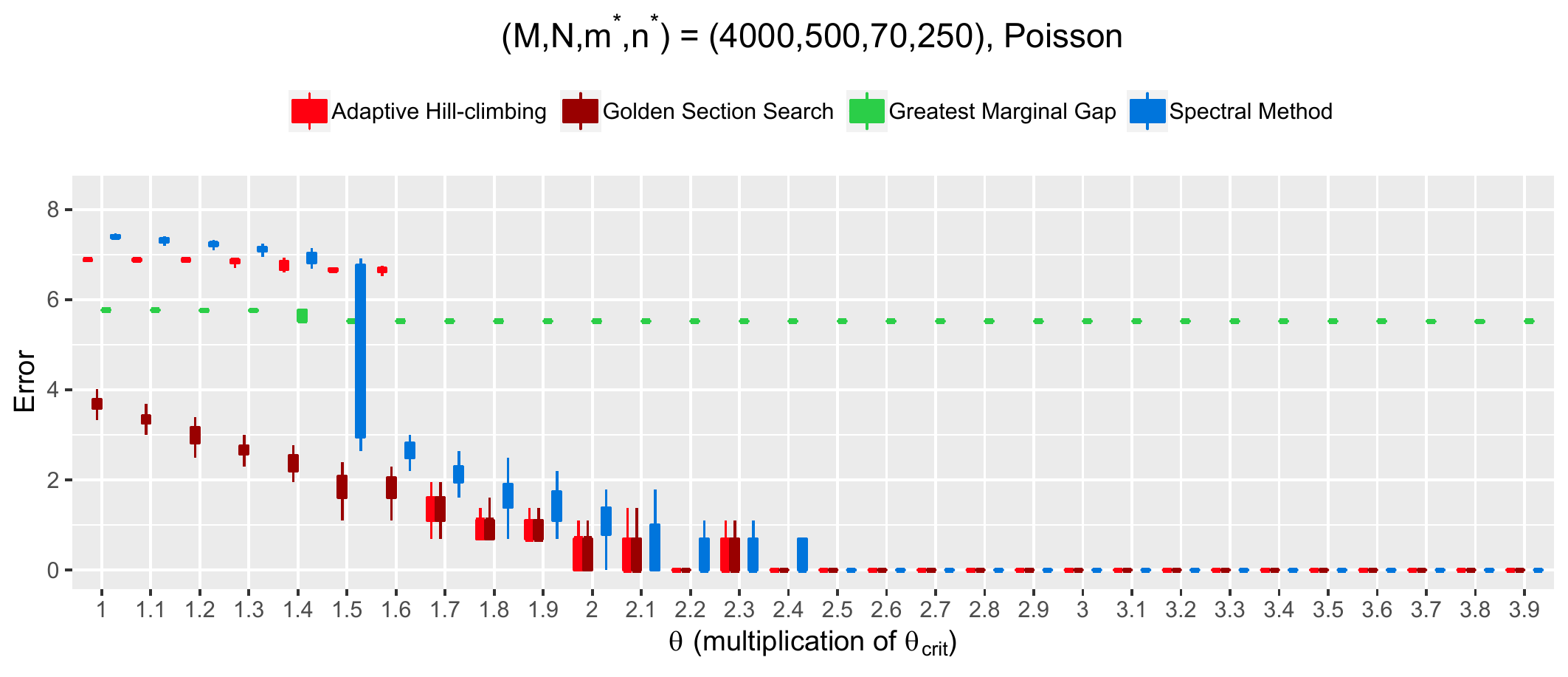} \\%
	 
	\includegraphics[width=0.9\textwidth]{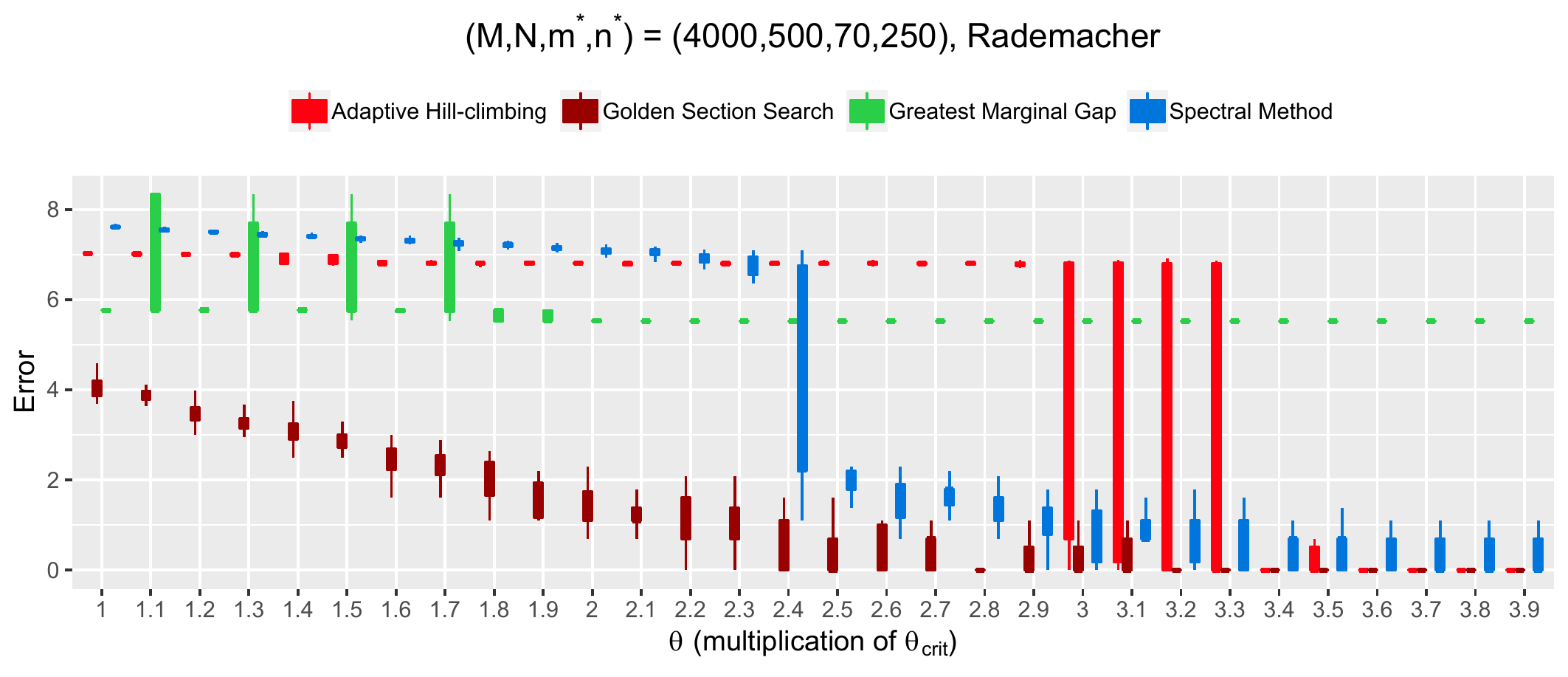} \\%
	\caption{Error counts for an imbalanced design}%
	\label{fig:plot2}%
\end{figure*}
\clearpage

\begin{figure}[H]%
	\centering
	\subfloat{{\includegraphics[width=.5\textwidth]{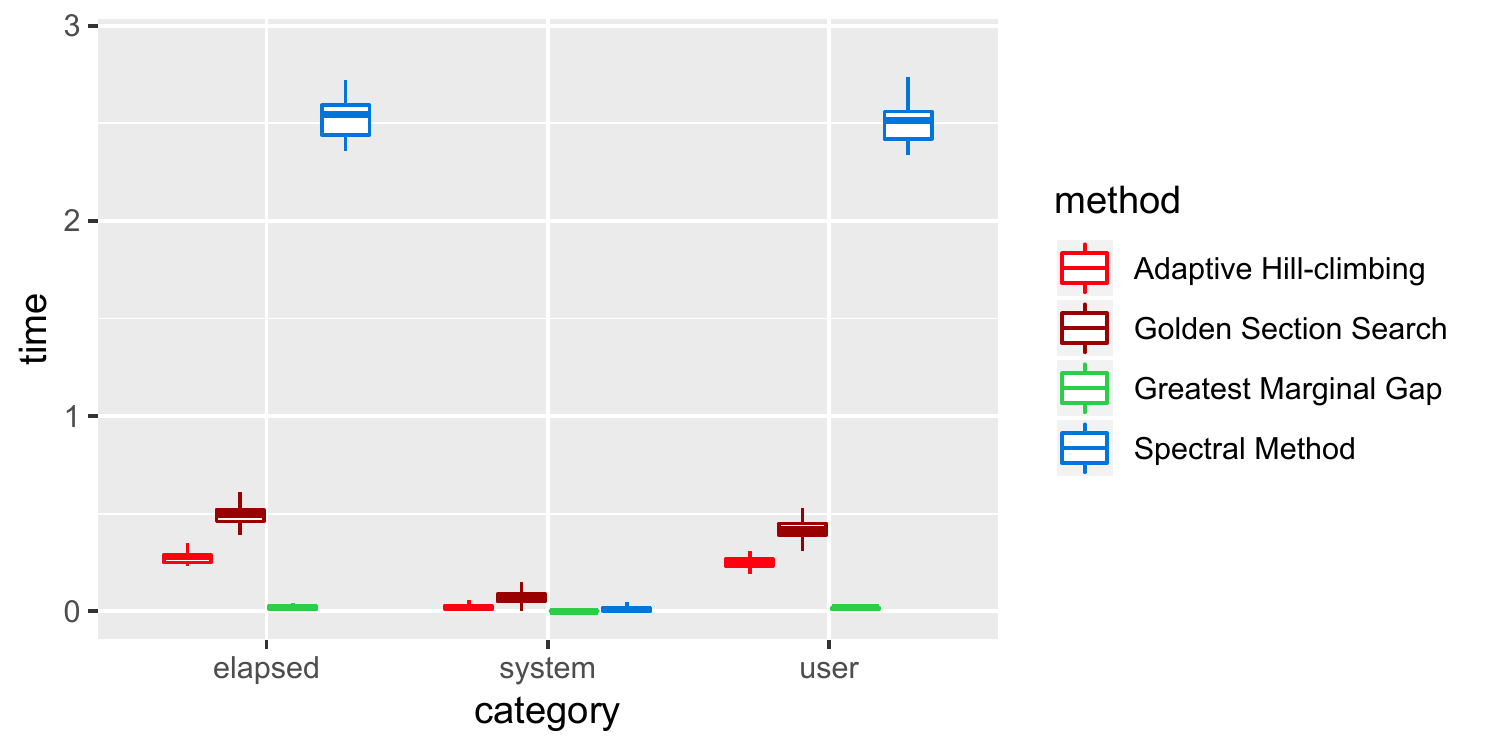} }}\\%
	\caption{Computing times for different algorithms. (The computing time is calculated by function \textsf{`proc.time'} in programming language \textsf{R}. Category `user' represents the time for executing the program codes, `system' is the CPU time charged for execution by the system on behalf of the calling process, and `elapsed' is the sum of the other two.)}%
	\label{fig:time}%
\end{figure}

\begin{figure}[H]%
	\centering
	\subfloat{{\includegraphics[width=0.45\textwidth]{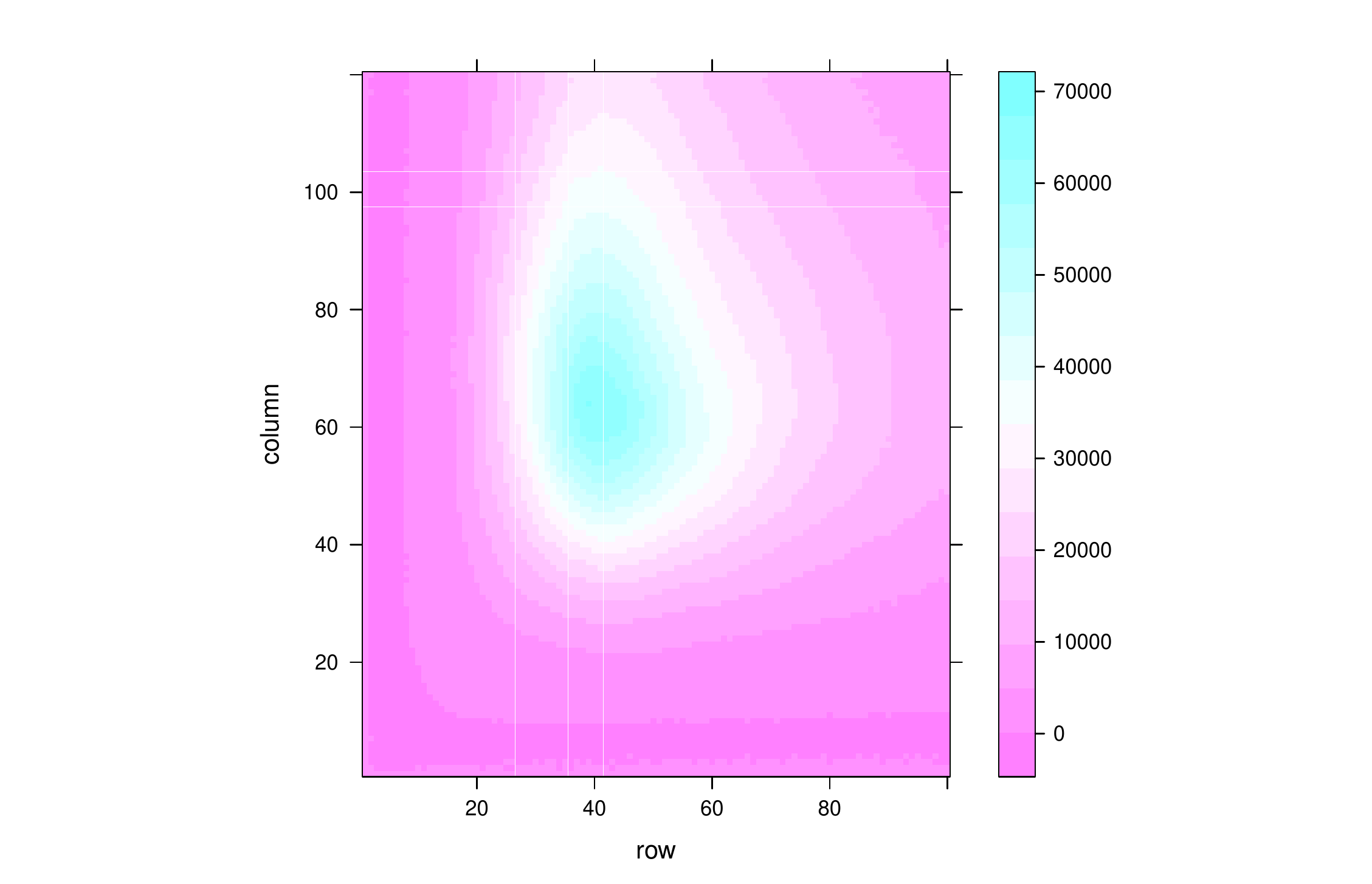} }}\\%
	\subfloat{{\includegraphics[width=0.45\textwidth]{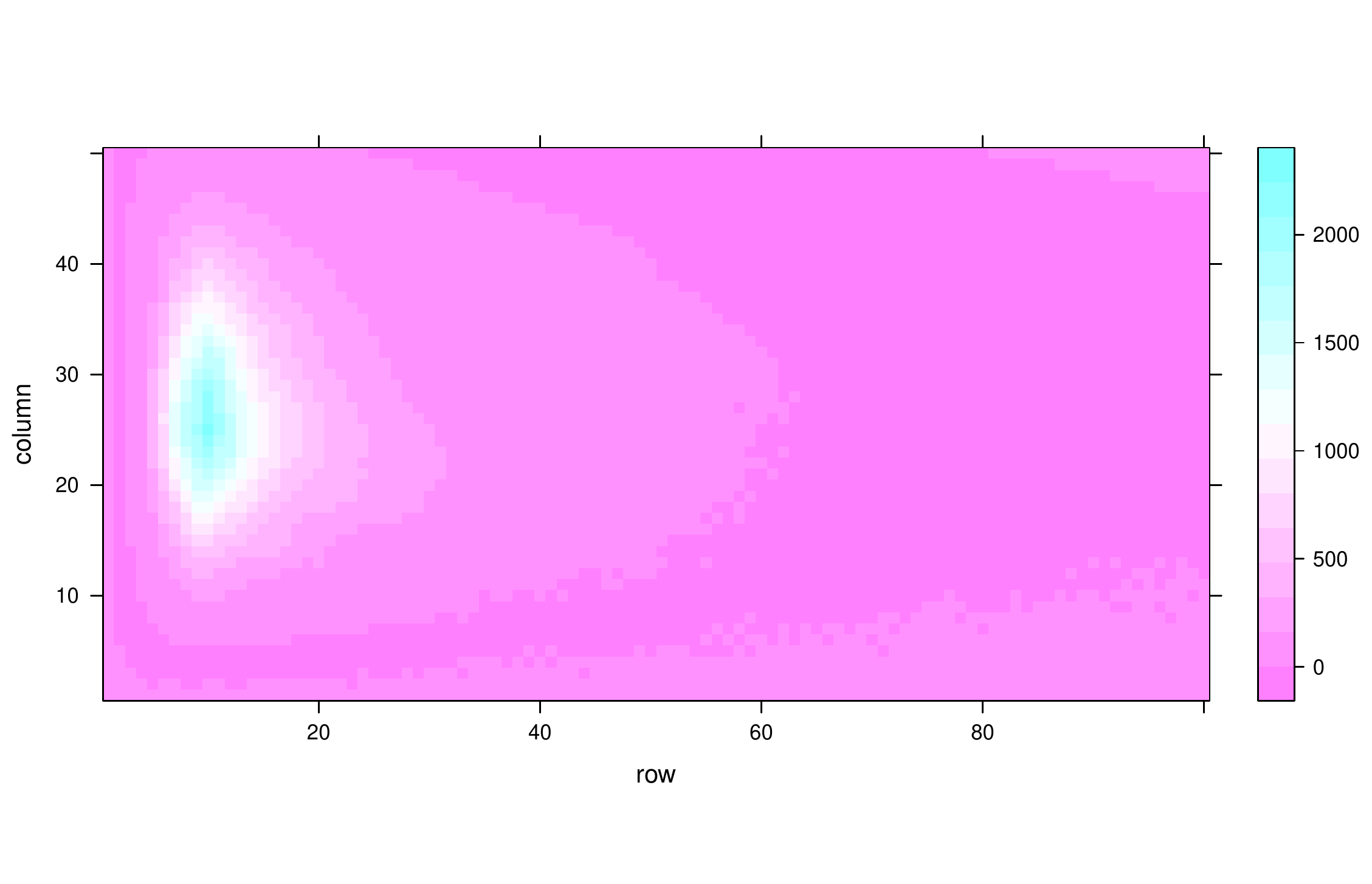} }}\\%
	\caption{Levelplots for illustrating unimodality}%
	\label{fig:gss}%
\end{figure}

The spectral method was computationally much more demanding than the other methods. As expected, the clustering algorithm based on the largest marginal gap is the fastest, however our algorithms are not too far (while much more accurate).

\subsection{Unimodality issue in \algref{gss}} \label{sec:gss}

The golden section search presented in \algref{gss} requires the function defined in \eqref{target} to be unimodal, in order to successfully discover the global maximal. In general, checking the unimodality of a function is often hard and here we present some numerical experiment illustrating the unimodality of the target function \eqref{target}.

We use two simulated Gaussian datasets. For each combination of $(M,N,m^*,n^*)$, we set the level of signal to $\theta = 2\times \theta_{\text{crit}}$, which is just above the signal level such that the search  algorithm makes few to zero mistakes. Then for every pair of $(m,n)$ such that $m \leq \bar{m}, n\leq \bar{n}$, we calculate the function value according to \eqref{target}. The scan statistic $\scan_{m,n}(\bX)$ is calculated by \algref{shabalin}, and $\lambda_{m,n}$ is approximated by 
\smash{$(2(\log M + \log N + m\log (M/m) + n\log (N/n))^{1/2}$}.
Here we use the fact that $\log {K \choose k} \approx k\log (K/k)$ when $k \ll K$. The simulation is otherwise set as in the previous section, in that we examine both a balanced and an imbalanced setting. In the balanced setup, $(M,N,m,n) = (300,360,40,60) $ and $(\bar{m}, \bar{n}) = (100,120)$. In the imbalanced setup, $(M,N,m,n) = (500,50,10,25) $ and $(\bar{m}, \bar{n}) = (100,50)$.

To better illustrate the unimodality, the simulated values of $f$ is raised to its fourth moment to enlarge the difference around the mode.  We can see from \figref{gss} that except for a few points around the edge, the target function \eqref{target} has a clear unimodal structure on the majority of the search field $[\bar{m}] \times [\bar{n}]$.

\section{Conclusion and discussion}

In this paper we propose a new multiscale scan statistic for localizing the anomalous submatrix inside a large noisy matrix, which does not require prior knowledge on the anomalous submatrix size. We show the signal strength needed for strong consistency, and design two algorithms with good approximating accuracy and computing speed. There are, however, some problems for future work and discussions.

\textbf{Minimaxity of the multiscale scan statistic}: \citep{butucea2015sharp} showed a sharp minimax signal bound, as we described in \thmref{bis}. Can our estimator based on the multiscale scan statistic reach the same minimax bound (which relies on knowledge of the submatrix size)? While we are unable to reduce the constant in \thmref{theorem} to the bound, we conjecture that our estimator is essentially minimax. Another measurement of accuracy of estimators is weak consistency. \citep{hajek2017submatrix, hajek2017information} shows that \eqref{scan} is minimax in the weak consistency sense. When is the multiscale scan statistic weak consistent and is it minimax?

\textbf{Proof of unimodality of \eqref{target}}: What is the relationship of $(M,N,$ $m^*,n^*,\bar{m},\bar{n})$ and $\theta$, such that \eqref{target} is unimodal on $[\bar{m}]\times [\bar{n}]$ with high probability? Solution to this problem will directly lead to the success guarantee of \algref{gss}.

\textbf{Computationally tractable algorithms}: There is a growing literature about using computationally tractable methods such as semidefinite programming to approximate NP-hard problems. (for example, \citep{chen2016statistical} on using SDP on finding the maximal likelihood estimator). 
Can such an algorithm be designed to approximate with guarantied accuracy the multiscale scan?

\textbf{Tight minimax bound for exponential family case}: As \citep{butucea2015sharp} mentioned, minimax theory on the case of exponential family is still open. We give a bound based on \citep{kolar2011minimax}, but this bound is not tight. Also, although the scan statistic under the exponential family setup is proved to be minimax in testing by \citep{butucea2013detection}, the analysis of its localization performance is still open.

%
\section{Acknowledgment}
The authors thank the reviewers for the helpful feedback, and Jiaqi Guo for help in simulation studies.

%
\bibliographystyle{chicago}
\bibliography{ref}

%

\clearpage

\appendix

\section{Simulation details for reproducibility}

In the appendix we list some detail setups of our simulation in order to provide essential information for reproducing the simulation results in the paper. All the simulation are done with statistical software \textsf{R} version 3.5.2. The graph is generated by graphing package \textsf{ggplot2}. The random number generator are set with seed $100$ by the \textsf{set.seed} function for each single graph. For the further reproduction purposes, all the simulation codes are available at \url{https://github.com/nozoeli/adaptiveBiclustering}.

\subsection{Simulation Setup for \figref{plot1} and \figref{plot2}}

We list setups used during our simulation, mainly regarding the inputs of \algref{adapt} and \algref{gss}.

For \algref{adapt}, the initial input $(m,n) = (25,25)$. The initial row index set $\hat{I}$ is uniformly randomly chosen from $[M]$. The algorithm is repeated $50$ times with I.I.D. chosen $\hat{I}$, and the result producing the largest multiscale scan statistic is returned.

For \algref{gss}, the input $(\bar{m}, \bar{n}) = (500, 500)$. When using \algref{shabalin} to calculate $f$, the initial row index is chosen with rows with the largest row sums across all columns (e.g. input $m = 20$, the initial row index set is made of the indices with top $20$ largest row sums), and \algref{shabalin} is run once to calculate the scan statistic inside $f$.

These setups are used across the calculation of all figures in \figref{plot1} and \figref{plot2}.

\subsection{Simulation Setup for \figref{time}}

The setup of \algref{adapt} during this part of simulation is $(m,n) = (5,5)$, with the rest setup the same as the previous section.

The setup of \algref{gss} is the same as the previous section.

The function used in spectral method is as follows. The singular vector decomposition is using the \textsf{R} function \textsf{svd}, and the $k$-means algorithm is calling \textsf{R} function \textsf{kmeans} with default setup and number of clusters $2$. 

\subsection{Simulation Setup for \figref{gss}}

The calculation of scan statistic during this part of simulation is repeating \algref{shabalin} $100$ times with I.I.D. initial row indexes uniformly randomly chosen from $[M]$, and return the result with the largest entry sum.

\end{document}